\newcommand{\tikzmark}[1]{\tikz[overlay,remember picture] \node (#1) {};}
\newcommand{\DrawBox}[1][]{%
    \tikz[overlay,remember picture]{
    \draw[red,#1]
      ($(left)+(-0.2em,0.9em)$) rectangle
      ($(right)+(0.2em,-0.3em)$);}
}
\newtheorem{Theorem}{Theorem}
\numberwithin{Theorem}{section}
\newtheorem{Proposition}[Theorem]{Proposition}
\theoremstyle{definition}
\theoremstyle{remark}
\newtheorem{Example}[Theorem]{Example}
\chardef\csname pre amssym.def
\def\undefine#1{\let#1\undefined}
\def\newsymbol#1#2#3#4#5{\let\next@\relax
 \ifnum#2=\@ne\let\next@\msafam@\else
 \ifnum#2=\tw@\let\next@\msbfam@\fi\fi
 \mathchardef#1="#3\next@#4#5}
\def\mathhexbox@#1#2#3{\relax
 \ifmmode\mathpalette{}{\m@th\mathchar"#1#2#3}%
 \else\leavevmode\hbox{$\m@th\mathchar"#1#2#3$}\fi}
\def\hexnumber@#1{\ifcase#1 0\or 1\or 2\or 3\or 4\or 5\or 6\or 7\or 8\or
 9\or A\or B\or C\or D\or E\or F\fi}
\font\teneufm=eufm10 \font\seveneufm=eufm7 \font\fiveeufm=eufm5
\begin{document}





\title[]{The number of perpendicularly inscribed polygons that intersect a given side in an odd sided regular polygon}

\author[J. A. M. Gondim]{João A. M. Gondim}


\bigskip

\maketitle

\centerline {Unidade Acadêmica do Cabo de Santo Agostinho} \par \centerline{
Universidade Federal Rural de Pernambuco}

\begin{abstract}
 The goal of this paper is to determine the number of perpendicularly inscribed polygons that intersect a given side of a regular polygon with an odd number of sides. This is done using circular permutations with repetition, and some special cases are calculated via circulant matrices and the Binomial Theorem. A method for finding such polygons, based on Banach's Fixed Point Theorem, is also developed.
\end{abstract}

\vspace{0.4cm}
\centerline{ {\bf Key Words:} 
Inscribed Polygons, Circular Permutations, Circulant Matrices} \vspace{0.4cm} 

\section{Introduction}

A polygonal chain $P$ defined by the sequence $(A_1, A_2, \ldots, A_k)$ is \textit{inscribed} in a simple polygon $Q$ if every vertex of $P$ lies in a side of $Q$. Moreover, we will say that $P$ is \textit{perpendicularly inscribed} in $Q$ if the line segment $A_iA_{i+1}$ is perpendicular to the side of $Q$ in which $A_i$ lies. We also refer to $P$ as an \textit{orbit}.

This paper focuses in the case in which $P$ closes after $n$ steps (i.e., $A_{n+1} = A_1$), so $P$ is a polygon perpendicularly inscribed in the simple polygon $Q$. As before, we also refer to such a polygon as a \textit{periodic orbit} with period $n$, or a $n$-periodic. See Figure \ref{Fig-1}.

\begin{figure}
\begin{minipage}[b]{.5\linewidth}
\centering
\includegraphics[scale=0.8,trim={8cm 0cm 9cm 0cm}]{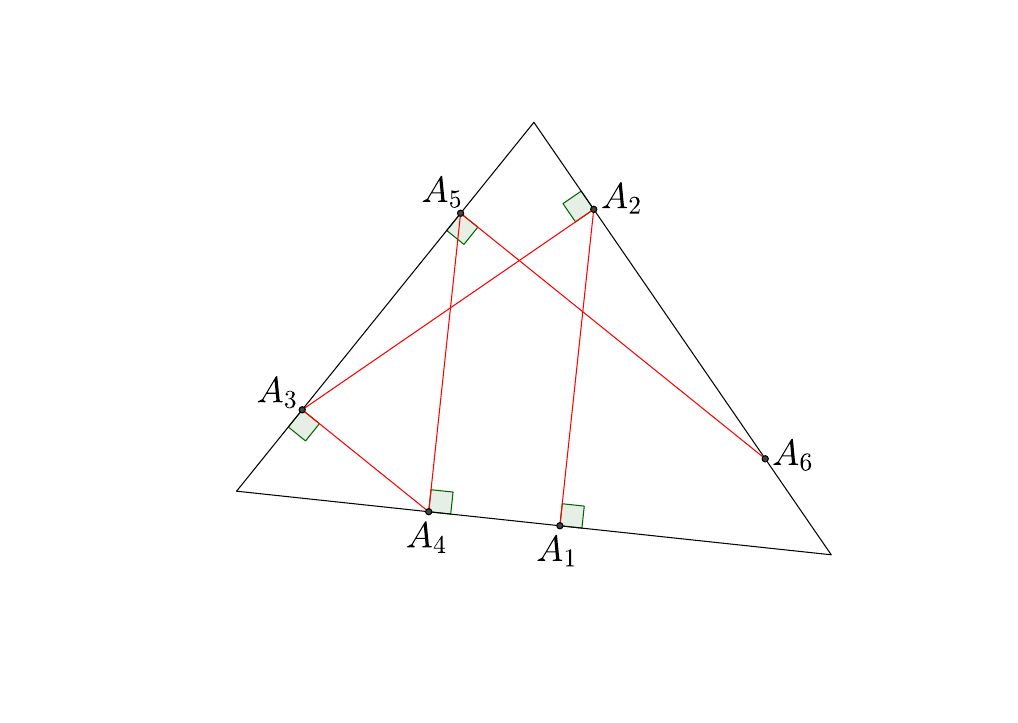}
\end{minipage}%
\begin{minipage}[b]{.5\linewidth}
\centering
\includegraphics[scale=0.8,trim={8cm 0cm 9cm 0cm}]{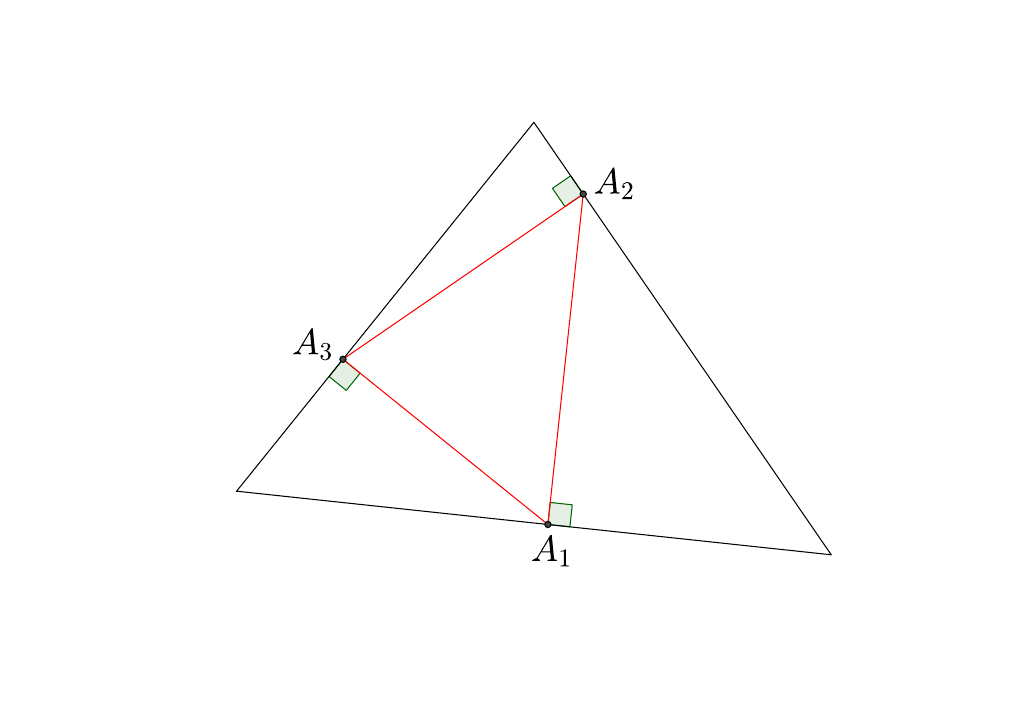}
\end{minipage}
\caption{On the left, a polygonal chain with 6 points perpendicularly inscribed in a triangle. On the right, a triangle perpendicularly inscribed in another triangle.}\label{Fig-1}
\end{figure}

The problem of finding a polygon inscribed in some kind of geometric object has been studied in the context of billiards, for example. In elliptic billiards \cite{chang1988elliptical,reznik2020can,reznik2020ballet,bialy2020dan,akopyan2020billiards}, this is related to Poncelet's Porism \cite{levi2007poncelet}, whereas for polygonal billiards this is a harder problem \cite{biswas1997periodic}. For triangular billiards, the problem has been solved in some special cases \cite{baxter2008periodic,cipra1995periodic}. Non-billiard papers, which are more closely related to this one, appear in the works of Lowry \cite{lowry1952polygons} and Pamfilos \cite{pamfilos2011polygons}.

The goal of this paper is to determine the number of perpendicularly inscribed polygons that intersect a given side of an odd sided regular polygon. Regular polygons with an even number of sides are not considered since the periodic orbits produced will always have period $2$.

In Section 2, an argument based on Banach's Fixed Point Theorem that allows us to find the periodic orbits is presented. Section 3 develops the main result, counting the periodic orbits using circular permutations with repetition \cite{macmahon1891applications}. Some special cases can be easily calculated using circulant matrices \cite{feng2013note}. This is done in Section 4. Section 5 concludes the work with applications of the formula deduced in Section 3.

\section{A method to produce periodic orbits}

Consider an odd sided regular polygon $Q$. Its sides are labeled $1$, $2$, $\ldots$, $2k+1$ counterclockwise as in Figure \ref{labels}. This is done simply to distinguish the orbits by the sequence of sides that are visited by the polygonal chain, starting at side $1$.

\begin{figure}[h!]
    \centering
    \includegraphics[scale=0.9,trim={5.5cm 1.5cm 5.5cm 0.5cm}]{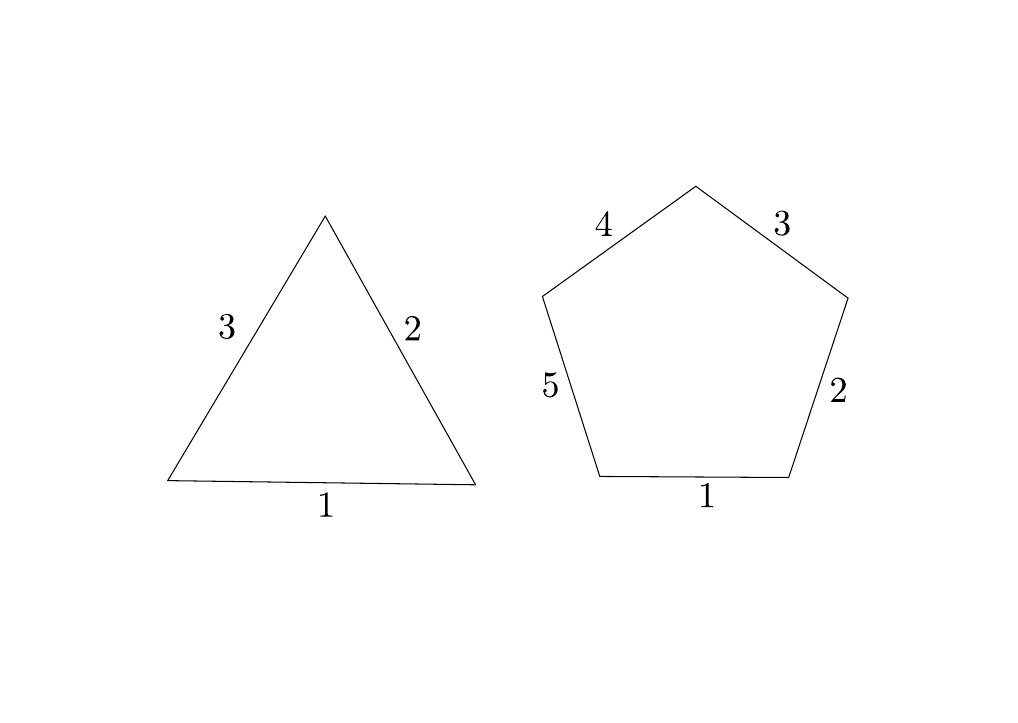}
    \caption{The labels of the regular polygon's sides.}
    \label{labels}
\end{figure}

Choose two points $P_1$ and $Q_1$ on a given side $i$ of $Q$ such that the lines perpendicular to $i$ through $P_1$ and $Q_1$ intersect the triangle on points $P_2$ and $Q_2$, respectively, that also lie on the same side. Let $\theta$ be the measure of the internal angles of $Q$, and let $x_1$ and $x_2$ be the measures of the segments $P_1Q_1$ and $P_2Q_2$, respectively. Then, as Figure \ref{contrac-fig-1} indicates,
\begin{equation}
    x_2 = \frac{x_1}{\sin(\theta/2)}.
    \label{contrac-1}
\end{equation}

\begin{figure}[h!]
    \centering
    \includegraphics[scale=1,trim={5.5cm 1cm 5.5cm 0.5cm}]{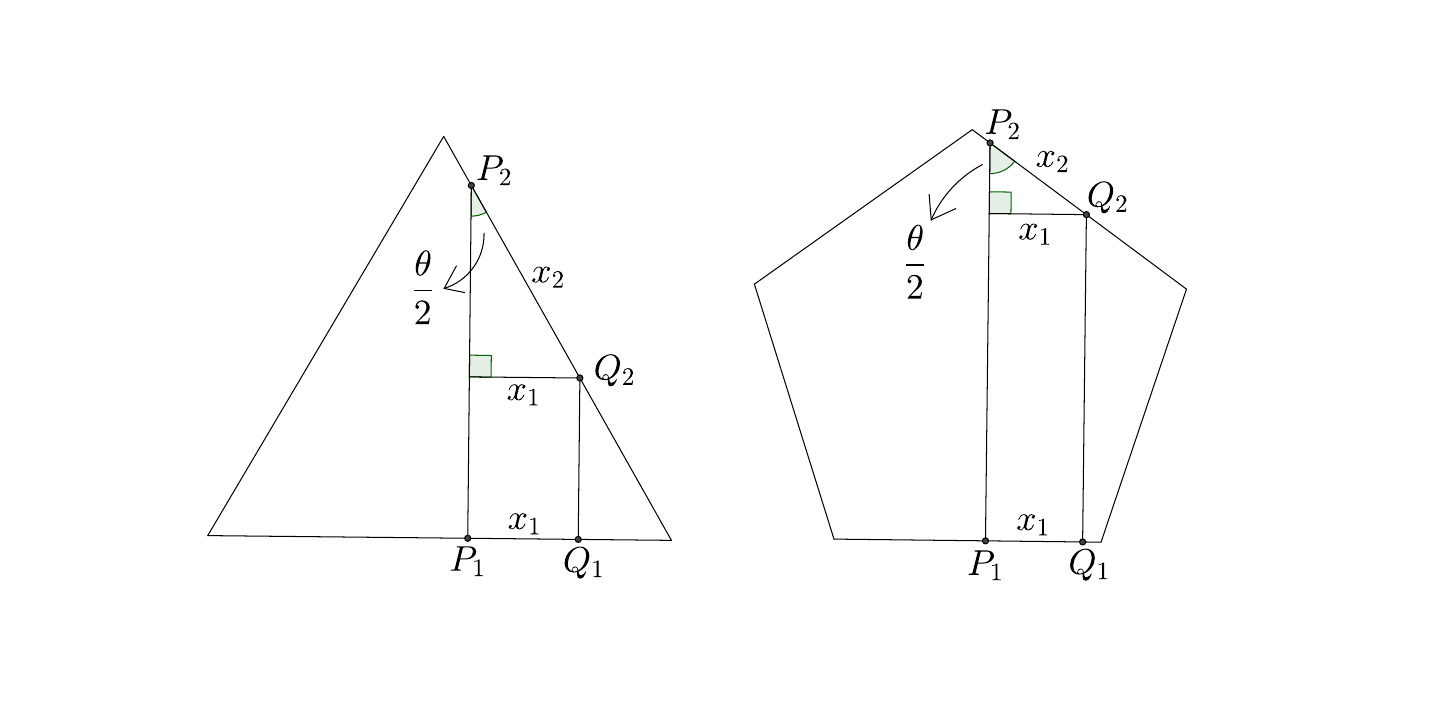}
    \caption{Figure for \eqref{contrac-1}.}
    \label{contrac-fig-1}
\end{figure}

Repeating this argument, if the orbits of $P_1$ and $Q_1$ return to their original side after $n$ iterations, then the segment defined by the returning points $P_n$ and $Q_n$ has length $x_n$ satisfying
\begin{equation}
    x_n = \frac{x_1}{\sin^n(\theta/2)}.
    \label{contrac-2}
\end{equation}

Hence, the backward orbit map with domain $P_nQ_n$ and side $i$ as codomain is a contraction. The backward orbit of any point between $P_n$ and $Q_n$ converges to an unique fixed point by Banach's Fixed Point Theorem. If the chosen sequence of sides visits only two of $Q$'s sides, then this fixed point is a vertex of $Q$ (equivalently, there are no $2$-periodics in odd sided regular polygons). If not, then it is a vertex of a periodic orbit with period $n$.

\section{The number of perpendicularly inscribed polygons that visit a given side}

Let $Q$ be a regular polygon with $2k+1$ sides. We wish to count all periodic orbits with period $n$ that pass through side $1$. Let $O_{2k+1}(n)$ denote this number. Without loss of generality, we can consider side $1$ to be the first side visited by the orbit and let $$\alpha = (1,\alpha_2,\ldots, \alpha_n)$$ be the sequence of sides it visits before returning to side $1$. Also, suppose that more than two different sides appear in $\alpha$.

Notice that the lines perpendicular to side $i$ can only intersect either side $i+k$ or side $i+k+1$, where additions are performed modulo $2k+1$. Consider a graph $G_{2k+1}$ with $2k+1$ vertices such that on vertex $i$ there are only edges adjoining it to vertices $i+k$ and $i+k+1$ modulo $2k+1$ and let $A_{2k+1}$ be its adjacency matrix. The entry $(1,1)$ in $A_{2k+1}^n$ counts the number of sequences of $n$ sides that start in side $1$ and also end in side $1$.

Besides the sequences that involve only two of $Q$'s sides, there are other sequences that prevent this entry from being our desired number. For example, if $\alpha$ is such that $\alpha_i = 1$ for some $i \in \{3, \ldots, n-1\}$, then $\alpha$ is counted more than once, i.e., periodic orbits produced from sequences such as $123132$ and $132123$ are the same, only the starting point on side $1$ is changed. The same happens whenever a sequence is a cyclic permutation of another.

Hence, we break $\alpha$ into smaller pieces, each with $1$ appearing only as the first element. We call these \textit{pure orbits} (of length $r$). The number of pure orbits of length $r$ in $Q$, which will be denoted by $P_{2k+1}(r)$, can be calculated as follows. Consider a graph $H_{2k+1}$ obtained from $G_{2k+1}$ by deleting both edges that incide on vertex $1$ and let $B_{2k+1}$ be its adjacency matrix. $P_{2k+1}(r)$ is given by the sum of entries of the submatrix of $B_{2k+1}^{r-2}$ given by rows and columns in the set $\{k+1,k+2\}$. Notice that we are considering $P_{2k+1}(2) = O_{2k+1}(2)= 2$ for every $k$.

Therefore, we have to consider partitions of $n$ in parts that are at least $2$, since it is necessary to reach at least one other side before returning to side $1$. Every part $r$ in the partition represents a pure orbit of length $r$. We first consider partitions with at least two distinct parts. 

Let $p$ be a partition of $n$ into parts $p_1, p_2, \ldots, p_m$ that are all greater than $2$. The number $F(p)$ of partitions that are not cyclical permutations of $p$ corresponds to the number of circular permutations of $m$ parts out of $r$ different kinds, with $\beta_1$ of the first kind, $\beta_2$ of the second and so forth. Let $CP(\beta_1, \ldots, \beta_r)$ and $LP(\beta_1, \ldots, \beta_r)$ be the numbers of circular and linear permutations of these $k$ objects. By \cite{macmahon1891applications}, $F(p)$ is given by
\begin{equation}
    CP(\beta_1, \ldots, \beta_r) = \dfrac{1}{m} \sum_{d \mid m} \phi(d) LP\left(\frac{N}{d}\beta'_1, \ldots, \frac{N}{d}\beta'_r\right)
    \label{circperm-1}
\end{equation}
where $N = \gcd(\beta_1, \ldots, \beta_r)$, $\beta'_i = \beta_i/N$ and $\phi$ is Euler's totient function.

Moreover, there are $P_{2k+1}(p_i)$ pure orbits, for $i \in \{1, \ldots, m\}$. Hence, the number of sequences of $n$ sides induced by partitions of $n$ that have at least two different parts is
\begin{equation}
    \sum_{p \in S_1(n)} F(p) \prod_{i=1}^m P_{2k+1}(p_i),
    \label{formula-par1}
\end{equation}
where $S_1(n)$ is the set of partitions of $n$ with parts greater than $1$ and, at least, two distinct ones among them.

On the other hand, partitions of $n$ with equal parts consist of a divisor $d \neq 1$ of $n$ added $n/d$ times. For every part, we must choose one pure $d$-periodic among the $\ell_d = P_{2k+1}(d)$ possibilities. Hence, if $\beta_1$, $\ldots$, $\beta_{\ell_d}$ are the numbers of pure $d$-periodics of each kind, then 
\begin{equation}
    \sum_{i=1}^{\ell_d} \beta_i = \dfrac{n}{d}.
    \label{soma}
\end{equation}

Consequently, if $S_2(n/d)$ is the set of nonnegative integer solutions of $\eqref{soma}$ and $x \in S_2(n/d)$, then the number of sequences of $n$ sides induced by these partitions is
\begin{equation}
    \sum_{\substack{d \mid n \\ d \neq 1}} \sum_{x \in S_2(n/d)} CP(\hat{x}),
    \label{formula-2}
\end{equation}
where $\hat{x}$ denotes the nonzero terms in $x$.

Finally, notice that for every divisor $d \neq 1$ of $n$, the $d$-periodics traveled $n/d$ times are counted above, so we need to subtract 
\begin{equation}
    \sum_{\substack{d \mid n \\ d \neq 1}} O_{2k+1}(d).
    \label{formula-3}
\end{equation}

Notice that when we subtract $O_{2k+1}(2) = 2$ we are disregarding the sequences of sides that visit only two of them. Combining \eqref{formula-par1}, \eqref{formula-2} and \eqref{formula-3} we get 
\begin{equation}
    \begin{aligned}
        O_{2k+1}(n) &= \sum_{p \in S_1(n)} F(p) \prod_{i=1}^m P_{2k+1}(p_i)  \\
       &+ \sum_{\substack{d \mid n \\ d \neq 1}} \left(\sum_{x \in S_2(n/d)} CP(\hat{x}) -  O_{2k+1}(d)\right).
    \end{aligned}
    \label{model1}
\end{equation}

Some examples of the formula above will be discussed in  Section 5.

\section{Results regarding some special cases}

The $i$-th line of the adjacency matrix $A_{2k+1}$, defined in the previous Section, has nonzero elements (which are ones) in columns $i+k$ and $i+k+1$ modulo $2k+1$. It follows that $A_{2k+1}$ is a circulant matrix. To easily compute powers of a circunlant matrix, we follow \cite{feng2013note} and consider the matrix $C_{2k+1}$ such that its $i$-th line consists entirely of zeros, except for a $1$ in column $i+1$.

$A_{2k+1}$ can be written, then, as 
\begin{equation}
    A_{2k+1} = C_{2k+1}^k + C_{2k+1}^{k+1}.
    \label{circulant}
\end{equation}

Notice, also, that $C_{2k+1}^{2k+1} = I_{2k+1}$, the identity matrix. We now prove some interesting results.

\begin{Proposition}
$O_{2k+1}(2k+1) = 2$ for every $k \geq 1$.
\label{prop-1}
\end{Proposition}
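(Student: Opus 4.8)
The plan is to compute the $(1,1)$ entry of $A_{2k+1}^{2k+1}$ directly, exploiting the decomposition \eqref{circulant} together with the Binomial Theorem, and then to verify that this entry is already equal to $O_{2k+1}(2k+1)$ with no corrections. Since $C_{2k+1}^{k}$ and $C_{2k+1}^{k+1}$ are both powers of $C_{2k+1}$ they commute, so I would expand
\[
A_{2k+1}^{2k+1} = \left(C_{2k+1}^{k} + C_{2k+1}^{k+1}\right)^{2k+1} = \sum_{j=0}^{2k+1} \binom{2k+1}{j}\, C_{2k+1}^{\,k(2k+1-j)+(k+1)j}.
\]
The exponent simplifies to $k(2k+1)+j$, and since $C_{2k+1}^{2k+1}=I_{2k+1}$ the only thing that matters is this exponent modulo $2k+1$, which is just $j \bmod (2k+1)$.

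Next I would read off the $(1,1)$ entry term by term. The matrix $C_{2k+1}^{j}$ has the single nonzero entry of its first row in column $1+j$ taken modulo $2k+1$, so its $(1,1)$ entry equals $1$ exactly when $j \equiv 0 \pmod{2k+1}$ and $0$ otherwise. Among the indices $j \in \{0,1,\ldots,2k+1\}$ the only ones with $j \equiv 0$ are $j=0$ and $j=2k+1$, each carrying coefficient $\binom{2k+1}{0}=\binom{2k+1}{2k+1}=1$, while every intermediate term with $1 \le j \le 2k$ contributes $0$ to position $(1,1)$. Hence the $(1,1)$ entry of $A_{2k+1}^{2k+1}$ is exactly $2$.

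It then remains to argue that this raw entry coincides with $O_{2k+1}(2k+1)$, i.e. that none of the subtractions in \eqref{model1} are triggered. The two surviving closed walks are precisely the one that always steps $i \mapsto i+k$ and the one that always steps $i \mapsto i+(k+1)$. Because $\gcd(k,2k+1)=1$ and $\gcd(k+1,2k+1)=1$ (both by a one-line Euclidean computation), each of these walks visits all $2k+1$ sides, each exactly once. Consequently both orbits genuinely meet more than two sides, so neither is among the excluded $2$-periodics; side $1$ is visited only as the starting point, so there is no cyclic-permutation overcounting and no proper divisor period traversed repeatedly; and the two orbits are visibly distinct. This forces $O_{2k+1}(2k+1)=2$. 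I expect this last reconciliation to be the main obstacle, since the general formula \eqref{model1} carries several correction terms; the key observation that disarms all of them is that for period exactly $2k+1$ the coprimality of both step sizes with $2k+1$ makes every admissible closed walk a single full traversal of the polygon.
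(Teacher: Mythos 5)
Your proposal is correct and follows essentially the same route as the paper: decompose $A_{2k+1}=C_{2k+1}^{k}+C_{2k+1}^{k+1}$, expand by the Binomial Theorem, and observe that only $j=0$ and $j=2k+1$ contribute to the $(1,1)$ entry, giving $2$. Your closing paragraph, using $\gcd(k,2k+1)=\gcd(k+1,2k+1)=1$ to check that both closed walks visit all sides and hence require none of the corrections in \eqref{model1}, makes explicit a reconciliation the paper handles only by exhibiting the two orbits, and is a welcome tightening rather than a different method.
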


\begin{proof}
The paths $1 \rightarrow k+1 \rightarrow 2k+1 \rightarrow \ldots \rightarrow (2k+1)k+1$ and $1 \rightarrow (k+1)+1 \rightarrow 2(k+1)+1 \rightarrow \ldots \rightarrow (2k+1)(k+1)+1$ (modulo $2k+1)$ are $(2k+1)$-periodics. We show that these are the only ones. In order to calculate the $(1,1)$ entry of $A_{2k+1}^{2k+1}$, we use the Binomial Theorem and write
\begin{align*}
    A_{2k+1}^{2k+1} &= \left(C_{2k+1}^k + C_{2k+1}^{k+1}\right)^{2k+1} \\
    &= \sum_{j=0}^{2k+1} \binom{2k+1}{j}C_{2k+1}^{kj}C_{2k+1}^{(k+1)(2k+1-j)}.
\end{align*}

The exponent of $C_{2k+1}$ in the last expression can be simplified to $$(k+1)(2k+1)-j.$$ Since we are looking for the $(1,1)$ entry (actually all entries in the diagonal are the same), we want the coefficient of $C_{2k+1}^{2k+1} = I_{2k+1}$ in this expansion. The exponent is a multiple of $2k+1$ if $j=0$ or if $j = 2k+1$, so the desired coefficient is $$\binom{2k+1}{0}+\binom{2k+1}{2k+1} = 2.$$ Hence, there are only two ways to start in side $1$ and return to it after $2k+1$ steps. Illustrations for $O_3(3)$ and $O_5(5)$ are provided in Figure \ref{orbitas-prop4-1}.

\end{proof}

\begin{figure}[h!]
    \centering
    \includegraphics[scale=0.7,trim={5.5cm 1cm 5.5cm 0.5cm}]{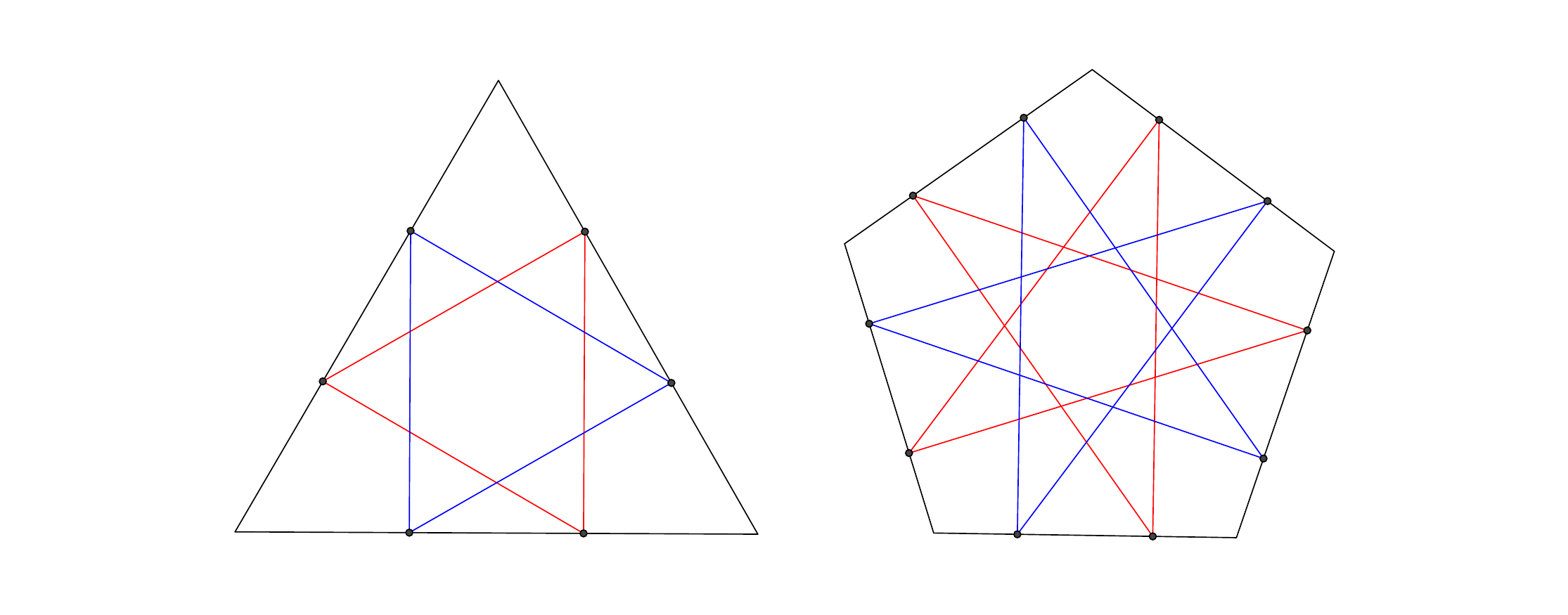}
    \caption{Illustrations for $O_{2k+1}(2k+1) = 2$ for $k = 1$ (triangle) and $k = 2$ (pentagon).}
    \label{orbitas-prop4-1}
\end{figure}

\begin{Proposition}
$O_{2k+1}(2t+1) = 0$ if $1 \leq t < k$.
\label{prop-2}
\end{Proposition}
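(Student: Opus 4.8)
The plan is to mimic the binomial-expansion computation used in Proposition \ref{prop-1}, since once more it suffices to show that the $(1,1)$ entry of $A_{2k+1}^{2t+1}$ vanishes. Indeed, that entry counts the closed walks of length $2t+1$ based at side $1$, and every periodic orbit through side $1$ gives rise to such a walk; so if no closed walk exists, then $O_{2k+1}(2t+1) = 0$.

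First I would expand, using \eqref{circulant} and the Binomial Theorem,
\begin{equation*}
A_{2k+1}^{2t+1} = \left(C_{2k+1}^k + C_{2k+1}^{k+1}\right)^{2t+1} = \sum_{j=0}^{2t+1} \binom{2t+1}{j} C_{2k+1}^{(k+1)(2t+1)-j},
\end{equation*}
where the exponent $kj + (k+1)(2t+1-j)$ has been simplified to $(k+1)(2t+1)-j$ exactly as in Proposition \ref{prop-1}. Since $C_{2k+1}^m$ is a permutation matrix whose diagonal is nonzero precisely when $m \equiv 0 \pmod{2k+1}$, the $(1,1)$ entry of $A_{2k+1}^{2t+1}$ equals the sum of the binomial coefficients $\binom{2t+1}{j}$ over those $j \in \{0, 1, \ldots, 2t+1\}$ for which $(k+1)(2t+1) - j \equiv 0 \pmod{2k+1}$.

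The key step is then a short modular computation. Because $2(k+1) = 2k+2 \equiv 1 \pmod{2k+1}$, we have $k+1 \equiv 2^{-1} \pmod{2k+1}$, whence $(k+1)(2t+1) \equiv 2^{-1}(2t+1) \equiv t + (k+1) \pmod{2k+1}$. The relevant condition therefore becomes $j \equiv t + k + 1 \pmod{2k+1}$. The hypothesis $1 \leq t < k$ is exactly what makes this unsatisfiable in the allowed range: on one hand $t + k + 1 \leq 2k < 2k+1$, so $t+k+1$ is already its own residue; on the other hand $t < k$ gives $t + k + 1 > 2t + 1$, so no $j$ in $\{0, 1, \ldots, 2t+1\}$ meets the congruence, while the next smaller representative $t+k+1-(2k+1) = t-k$ is negative.

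Consequently the sum defining the $(1,1)$ entry is empty, that entry is $0$, and hence $O_{2k+1}(2t+1) = 0$. I expect the only delicate point to be bookkeeping the range of admissible $j$ against the residue $t+k+1$, and checking that the strict inequality $t<k$ (not merely $t \le k$) is what rules out $j = t+k+1$; everything else reduces to the same binomial manipulation already carried out for Proposition \ref{prop-1}.
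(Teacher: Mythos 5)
Your proposal is correct and follows essentially the same route as the paper: expand $A_{2k+1}^{2t+1}$ via \eqref{circulant} and the Binomial Theorem, reduce to asking when the exponent $(k+1)(2t+1)-j$ is a multiple of $2k+1$, and show no admissible $j$ works. The only difference is cosmetic — you solve the congruence for $j$ (getting $j \equiv t+k+1$, which lies outside $\{0,\ldots,2t+1\}$ precisely because $t<k$), whereas the paper traps the exponent strictly between the consecutive multiples $t(2k+1)$ and $(t+1)(2k+1)$; both verifications are sound.
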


\begin{proof}
We now have 
\begin{align*}
    A_{2k+1}^{2t+1} &= \left(C_{2k+1}^k + C_{2k+1}^{k+1}\right)^{2t+1} \\
    &= \sum_{j=0}^{2t+1} \binom{2t+1}{j} C_{2k+1}^{kj} C_{2k+1}^{(k+1)(2t+1-j)}.
\end{align*}

Now, a simplification of the exponent yields $$(2t+1)(k+1)-j.$$ 

Since $j \in \{0,1, \ldots, 2k+1\}$, we have $$(2t+1)k = (2t+1)(k+1)-(2t+1) \leq (2t+1)(k+1)-j \leq (2t+1)(k+1),$$ so the exponent cannot equal any multiple of $2k+1$, hence the coefficient of the identity matrix in the expansion is zero. In particular, there is no $(2t+1)$-periodic in a regular polygon with $2k+1$ sides for $t < k$.

\end{proof}

\begin{Proposition}
$O_{2k+1}(4) = 3$ for all $k \geq 1$.
\label{prop-3}
\end{Proposition}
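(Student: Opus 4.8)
The plan is to mirror the circulant-matrix and Binomial Theorem argument of Propositions \ref{prop-1} and \ref{prop-2} to count the closed walks of length $4$ based at side $1$, and then to convert that raw count into genuine periodic orbits by discarding the degenerate walks and quotienting out cyclic repetitions.

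First I would expand
\begin{align*}
A_{2k+1}^4 = \left(C_{2k+1}^k + C_{2k+1}^{k+1}\right)^4 = \sum_{j=0}^{4}\binom{4}{j}\, C_{2k+1}^{\,4(k+1)-j},
\end{align*}
whose general exponent is $4(k+1)-j = 4k+4-j$. Since $4k+4 \equiv 2 \pmod{2k+1}$, this exponent is a multiple of $2k+1$ exactly when $j \equiv 2 \pmod{2k+1}$; as $j \in \{0,1,2,3,4\}$ and $2k+1 \geq 3$, the only possibility is $j = 2$. Hence the coefficient of $C_{2k+1}^{\,2(2k+1)} = I_{2k+1}$ equals $\binom{4}{2} = 6$, so the $(1,1)$ entry of $A_{2k+1}^4$ is $6$: there are exactly six length-$4$ walks that leave side $1$ and return to it.

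Next I would make these six walks explicit. Since the surviving term is $j = 2$, each such walk takes two steps of size $k+1$ and two of size $k$ (modulo $2k+1$), so the six walks correspond to the $\binom{4}{2}$ placements of the two $(k+1)$-steps. Reducing the partial sums modulo $2k+1$, the visited-side sequences are $(1,k+2,2,k+2)$, $(1,k+1,2k+1,k+1)$, $(1,k+2,1,k+1)$, $(1,k+1,1,k+2)$, $(1,k+2,1,k+2)$ and $(1,k+1,1,k+1)$. Using $k \geq 1$, one checks that the last two visit only two sides, while each of the first four visits three distinct sides.

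Finally I would sort these walks into orbits. The two sequences $(1,k+2,1,k+2)$ and $(1,k+1,1,k+1)$ visit only two sides and hence are the degenerate $2$-periodics (vertices) excluded in Section 2. The two sequences $(1,k+2,1,k+1)$ and $(1,k+1,1,k+2)$ return to side $1$ at their midpoint and are cyclic permutations of one another: together they represent a single genuine $4$-periodic, the concatenation of the two distinct pure $2$-orbits through side $1$, which crosses side $1$ twice. Each of the remaining two sequences crosses side $1$ only once and is therefore its own orbit. This yields $1+1+1 = 3$ distinct periodic orbits, so $O_{2k+1}(4) = 3$.

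The binomial computation is as routine as in the previous two propositions; the step demanding the most care is the bookkeeping of the final paragraph — correctly isolating the two degenerate two-sided repetitions to be discarded, and verifying that the two midpoint-return walks are one $4$-periodic counted twice rather than either two distinct orbits or a disguised $2$-periodic.
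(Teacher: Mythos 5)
Your proposal is correct and follows essentially the same route as the paper: the identical circulant/Binomial expansion isolating $j=2$ with coefficient $\binom{4}{2}=6$, followed by listing the six closed walks, discarding the two two-sided degenerate ones, and identifying $(1,k+1,1,k+2)$ with its cyclic shift $(1,k+2,1,k+1)$ to arrive at three orbits. Your version is in fact slightly more explicit than the paper's in deriving the six walks from the placements of the $k$- and $(k+1)$-steps and in justifying the final bookkeeping.
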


\begin{proof}
As before, we write
\begin{align*}
    A_{2k+1}^4 &= \left(C_{2k+1}^k + C_{2k+1}^{k+1}\right)^4 \\
    &= \sum_{j=0}^{2t+1} \binom{4}{j} C_{2k+1}^{kj} C_{2k+1}^{(k+1)(4-j)}.
\end{align*}

The exponent of $C_{2k+1}$ is $$4k+4-j,$$ which is a multiple of $2k+1$ only for $j = 2$. The coefficient in the binomial expansion is $\dbinom{4}{2} = 6$. These include the sequences $(1,k+1,1,k+1)$ and $(1,k+2,1,k+2)$, which do not produce periodic orbits, and the sequences $(1,k+1,1,k+2)$ and $(1,k+2,1,k+1)$, which produce the same periodic orbit. Hence, out of the $6$ walks that arise from the $(1,1)$ entry of $A_{2k+1}^4$, only $3$ remain: $(1,k+1,2k+1,k+1)$, $(1,k+2,2,k+2)$ and $(1,k+1,1,k+2)$. Figure \ref{Fig-Prop4-3} illustrates the cases of $O_3(4)$ and $O_5(4)$.

\end{proof}

\begin{figure}[h!]
    \centering
    \includegraphics[scale=0.7,trim={5.5cm 1cm 5.5cm 1cm}]{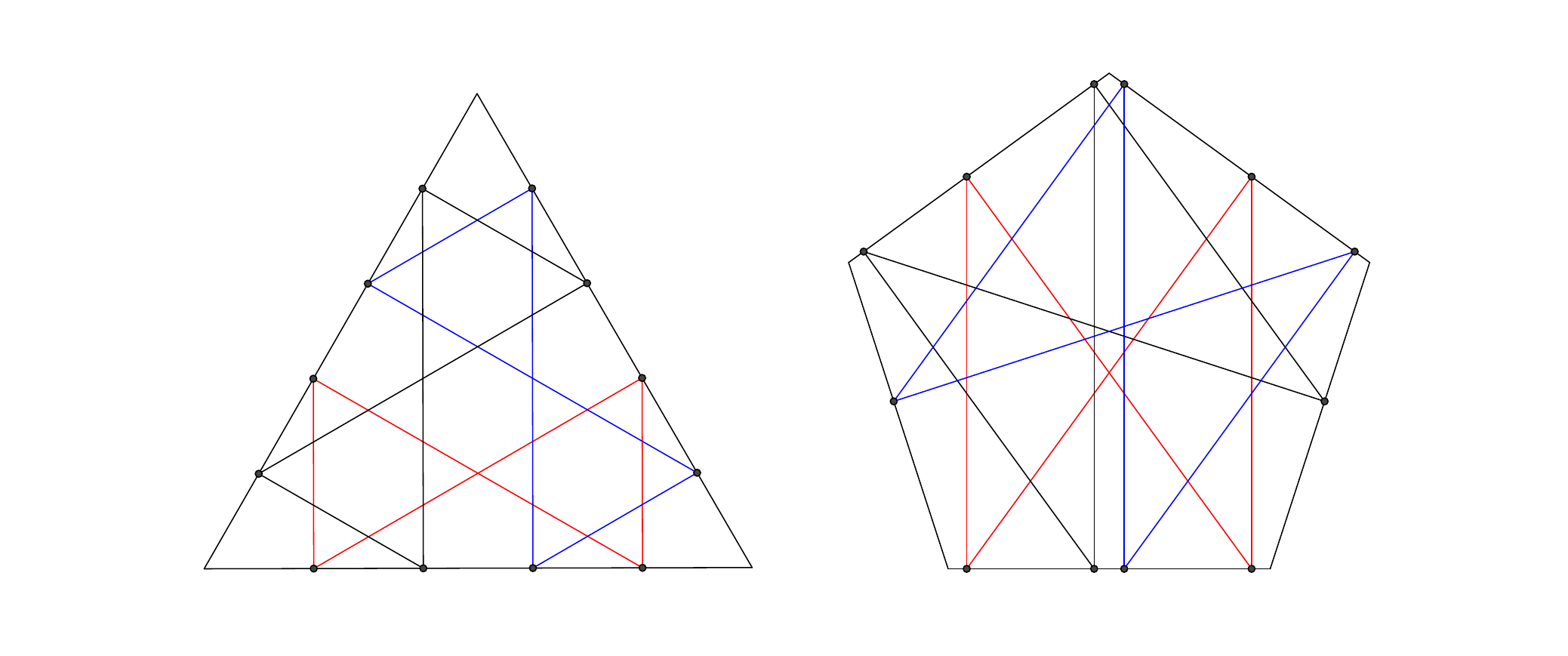}
    \caption{Illustrations for $O_{2k+1}(4) = 3$ for $k = 1$ (triangle) and $k = 2$ (pentagon).}
    \label{Fig-Prop4-3}
\end{figure}

\begin{Proposition}
$O_{2k+1}(2k+3) = 4k+2$ for all $k \geq 1$.
\label{prop-4}
\end{Proposition}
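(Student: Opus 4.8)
The plan is to follow the template of the previous propositions: first extract the $(1,1)$ entry of $A_{2k+1}^{2k+3}$ via the Binomial Theorem, and then correct this raw walk count down to the number of genuinely distinct periodic orbits. Expanding as in \eqref{circulant},
\begin{align*}
A_{2k+1}^{2k+3} = \sum_{j=0}^{2k+3}\binom{2k+3}{j}\, C_{2k+1}^{kj}\, C_{2k+1}^{(k+1)(2k+3-j)},
\end{align*}
the exponent of $C_{2k+1}$ simplifies to $(k+1)(2k+3)-j$. Since $2k+3\equiv 2\pmod{2k+1}$, one has $(k+1)(2k+3)\equiv 2(k+1)\equiv 1\pmod{2k+1}$, so the exponent is $\equiv 1-j$; it is a multiple of $2k+1$ exactly when $j\equiv 1$, i.e. $j=1$ or $j=2k+2$, both lying in $\{0,\dots,2k+3\}$. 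Hence the coefficient of $I_{2k+1}$ is $\binom{2k+3}{1}+\binom{2k+3}{2k+2}=4k+6$, which is the $(1,1)$ entry.

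It remains to pass from these $4k+6$ closed walks at vertex $1$ to the orbit count. First I would note that since $2k+3$ is odd there are no walks visiting only two sides (such a walk alternates and cannot close after an odd number of steps), so every one of the $4k+6$ walks yields a genuine orbit via the contraction argument of Section 2. Moreover no orbit is non-primitive: any divisor $d$ with $1<d<2k+3$ is odd and satisfies $d\le (2k+3)/3<2k+1$, whence $O_{2k+1}(d)=0$ by Proposition \ref{prop-2}. Because the walks are primitive, an orbit $\mathcal{O}$ passing through side $1$ a total of $t(\mathcal{O})$ times accounts for exactly $t(\mathcal{O})$ of the closed walks (one per occurrence of side $1$ read as a starting point), so $4k+6=\sum_{\mathcal{O}} t(\mathcal{O})$, where $t(\mathcal{O})$ is the number of pure orbits in the decomposition of $\mathcal{O}$.

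The crux is to enumerate the admissible pure-orbit decompositions, i.e. the partitions of $2k+3$ into parts $\ge 2$ each realized by some pure orbit. By Proposition \ref{prop-2} no odd length below $2k+1$ supports a pure orbit, so any odd part must be $\ge 2k+1$; since $2k+3$ is odd there is at least one odd part, and two of them would already exceed $2k+3$. Thus there is exactly one odd part, equal to $2k+1$ or $2k+3$, leaving only the partitions $(2k+3)$ and $(2k+1,2)$. The first yields orbits with $t=1$. The second yields orbits with $t=2$, formed by cyclically joining one of the two pure $(2k+1)$-orbits (the all-$(+k)$ and all-$(+(k+1))$ walks, each of which visits every side) with one of the two pure $2$-orbits, $1\to k+1\to 1$ and $1\to k+2\to 1$, giving $2\times 2=4$ such orbits.

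Finally, the raw count overcounts each orbit by $t(\mathcal{O})-1$; only the four orbits of type $(2k+1,2)$ have $t=2$, producing a total overcount of $4$, while all remaining orbits have $t=1$. Therefore
\begin{align*}
O_{2k+1}(2k+3) = (4k+6) - 4 = 4k+2.
\end{align*}
The main obstacle I anticipate is the bookkeeping in the third paragraph: rigorously justifying the identity $4k+6=\sum_{\mathcal{O}} t(\mathcal{O})$ (which rests on primitivity together with the absence of two-sided walks), and confirming that the partition analysis truly leaves only $(2k+3)$ and $(2k+1,2)$, including that the four type-$(2k+1,2)$ orbits are genuine (they visit more than two sides) and pairwise distinct.
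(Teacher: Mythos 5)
Your proposal is correct and follows essentially the same route as the paper: the binomial expansion of $A_{2k+1}^{2k+3}$ giving the raw count $4k+6$ (your exponent $(k+1)(2k+3)-j$ equals the paper's $(k+2)(2k+1)+1-j$), followed by the observation that the only admissible partitions of $2k+3$ into pure-orbit lengths are $(2k+3)$ and $(2k+1,2)$, whose four orbits are each double-counted, yielding $4k+6-4=4k+2$. Your extra bookkeeping (primitivity, the identity $4k+6=\sum_{\mathcal{O}}t(\mathcal{O})$, and the check that the type-$(2k+1,2)$ orbits visit more than two sides) only makes explicit what the paper leaves implicit.
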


\begin{proof}
Again, write
\begin{align*}
    A_{2k+1}^{2k+3} &= \left(C_{2k+1}^k + C_{2k+1}^{k+1}\right)^{2k+3} \\
    &= \sum_{j=0}^{2t+1} \binom{2k+3}{j} C_{2k+1}^{kj} C_{2k+1}^{(k+1)(2k+3-j)}.
\end{align*}

The exponent simplifies to $$(k+2)(2k+1)+1-j,$$ which is a multiple of $2k+1$ only for $j = 1$ or $j = 2k+2$. Hence, the $(1,1)$ entry of $A_{2k+1}^{2k+3}$ is $$\dbinom{2k+3}{1}+\dbinom{2k+3}{2k+2} = 4k+6.$$

The only remaining partitions of $2k+3$ are $2+(2k+1)$ and $2k+3$ itself, since every partition must involve at least one odd number and we have seen in Proposition \ref{prop-2} that there are no pure sequences of sides with odd length smaller than $2k+1$. Hence, the periodic orbits in the partition $2+(2k+1)$ are being counted twice. Since $P_{2k+1}(2) = 2$ and $P_{2k+1}(2k+1) = 2$ (by Proposition \ref{prop-1}), then $$O_{2k+1}(2k+3) = 4k+6 - 2\cdot 2 = 4k+2.$$ An illustration for $O_3(5) = 6$ is given in Figure \ref{orbits-prop4-4}.

\end{proof}

\begin{figure}[h!]
    \centering
    \includegraphics[scale=0.7,trim={5.5cm 1cm 5.5cm 1cm}]{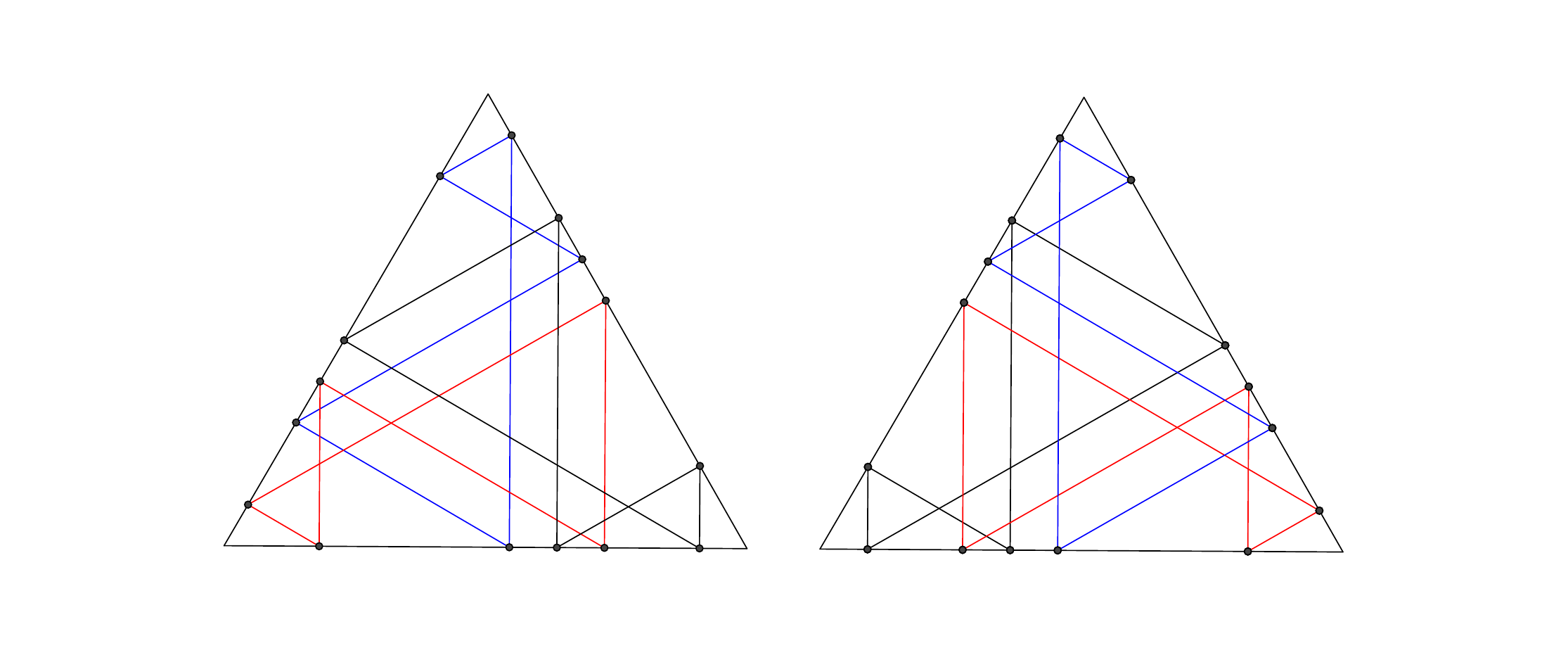}
    \caption{Illustrations for $O_{2k+1}(2k+3) = 4k+2$ for $k = 1$ (triangle).}
    \label{orbits-prop4-4}
\end{figure}

To finish this Section we note that all the periodic orbits appearing in Figures \ref{orbitas-prop4-1}, \ref{Fig-Prop4-3} and \ref{orbits-prop4-4} can be transformed into one another by a symmetry of the regular polygon. This does not happen every time, which can be seen, for example, in the case of the $6$-periodics originating from the sequences $(1,2,3,2,1,3)$ and $(1,2,1,3,1,2)$ in an equilateral triangle, shown in Figure \ref{6-periodics}.

\begin{figure}[h!]
    \centering
    \includegraphics[scale=0.7,trim={5.5cm 1cm 5.5cm 0cm}]{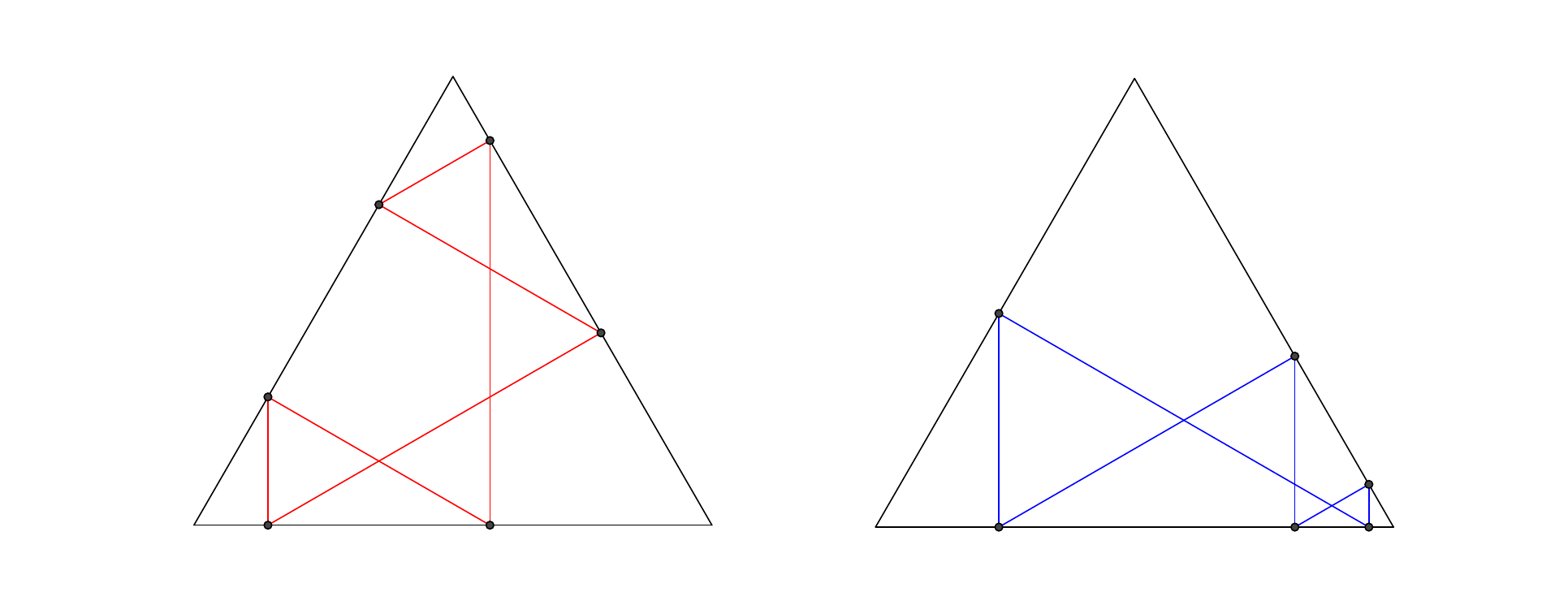}
    \caption{Two periodic orbits that cannot be transformed into each other by a symmetry of the regular polygon.}
    \label{6-periodics}
\end{figure}

\section{Applications}

\begin{Example}
$O_3(8) = 30$.
\end{Example}

\begin{proof}
It is easy to see that, in an equilateral triangle, there are exactly $2$ pure $r$-periodics for every $r \geq 2$, so $$\prod_{i=1}^m P_{2k+1}(p_i) = 2^m$$ in \eqref{formula-par1}. The partitions with at least two distinct parts are listed in Table \ref{example-1a}.

\begin{table}[!h]
 	\centering
 	\small
 	\caption{Partitions of $8$ with at least two distinct parts.}
 	\label{example-1a}
 	\begin{tabular}{cccc}
 		\hline 
 		$p$ & $m$ & $F(p)$ & $2^{m}F(p)$ \\ 
 		
 		 \hline
             $2+6$ & $2$ & $CP(1,1) = 1$ & $4$ \\
             $3+5$ & $2$ & $CP(1,1) = 1$ & $4$ \\
             $2+2+4$ & $3$ & $CP(2,1) = 1$ & $8$ \\
             $2+3+3$ & $3$ & $CP(1,2) = 1$ & $8$ \\
             Total   &     &               & $24$ \\
         		    \hline
 	\end{tabular}%
\end{table}

The trivial partition produces $2$ (pure) $8$-periodics. The other partitions with equal parts are $4+4$ and $2+2+2$ (with $n/d$ equal to $2$ and $4$, respectively). They are displayed in Table \ref{example-1b}.

\begin{table}[!h]
 	\centering
 	\small
 	\caption{The $4+4$ and $2+2+2+2$ partitions of $8$.}
 	\label{example-1b}
 	\begin{tabular}{cccccc}
 		\hline 
         $4+4$  & $(\alpha_1,\alpha_2)$ & $CP(\alpha_1,\alpha_2)$ & $2+2+2+2$  & $(\alpha_1,\alpha_2)$ & $CP(\alpha_1,\alpha_2)$ \\ 
 		
 		 \hline
 		    &   $(2,0)$ &  $1$ &   &  $(4,0)$ &  $1$ \\
 		    &   $(1,1)$ &  $1$ &   &  $(3,1)$ &  $1$ \\
 		    &   $(0,2)$ &  $1$ &   & $(2,2)$  &  $2$ \\
 		   Total    &   & $3$  &   &  $(1,3)$ &  $1$ \\
 		    &           &      &   &  $(0,4)$ &  $1$ \\
 		    &    & & Total   &   & $6$  \\
 		     \hline
 	\end{tabular}%
\end{table}

By \eqref{model1} and Proposition \ref{prop-3}, $$O_3(8) = 24 + 2 + 3 + 6 - O_3(2) - O_3(4) = 35 - 2 - 3 = 30.$$

\end{proof}

\begin{Example}
$O_5(6) = 10$.
\end{Example}

\begin{proof}
The partitions of $6$ are $2+2+2$, $2+4$, $3+3$ and $6$ itself. By Proposition \ref{prop-2} there are no pure $3$-periodics, so the $3+3$ partition does not need to be analyzed. As in Section 3, we consider the matrix $$B_5 = \left[ \begin{array}{ccccc} 0 & 0 & 0 & 0 & 0 \\ 0 & 0 & 0 & 1 & 1 \\ 0 & 0 & \tikzmark{left}0 & 0 & 1 \\ 0 & 1 & 0 & 0\tikzmark{right} & 0 \\ 0 & 1 & 1 & 0 & 0 \\ \end{array} \right].$$

The numbers of pure orbits with length $2$ is $2$, whereas for $4$ and $6$ they are given by the sum of entries in the following submatrices of $B_5^2$ and $B_5^4$, respectively:
$$B_5^2 = \left[ \begin{array}{ccccc} 0 & 0 & 0 & 0 & 0 \\ 0 & 2 & 1 & 0 & 0 \\ 0 & 1 & \tikzmark{left}1 & 0 & 0 \\ 0 & 0 & 0 & 1\tikzmark{right} & 1 \\ 0 & 0 & 0 & 1 & 2 \\ \end{array} \right]\DrawBox[thick], \quad B_5^4 = \left[ \begin{array}{ccccc} 0 & 0 & 0 & 0 & 0 \\ 0 & 5 & 3 & 0 & 0 \\ 0 & 3 & \tikzmark{left}2 & 0 & 0 \\ 0 & 0 & 0 & 2\tikzmark{right} & 3 \\ 0 & 0 & 0 & 3 & 5 \\ \end{array} \right]\DrawBox[thick].$$

Hence, $P_5(4) = 2$ and $P_5(6) = 4$. The partition $p = 2+4$ has $F(p) = CP(1,1) = 1$, so it produces $F(p)P_5(2)P_5(4) = 4$ periodic orbits.

Now let $p = 2+2+2$. There are two kinds of pure $2$ orbits and $n/2 = 3$, so there are $$CP(3,0) + CP(2,1) + CP(1,2) + CP(0,3) = 4$$ periodic orbits.

Finally, we conclude that $$O_5(6) = 4 + 4 + 4 - O_5(2) - O_5(3) = 10.$$

\end{proof}


\section*{Acknowledgement} The author would like to thank Liliana Gheorghe for the discussions that led to this work.
\bibliographystyle{acm}
\bibliography{bibliography.bib}
\end{document}